\documentclass[11pt]{amsart}
\textwidth=14.5cm \oddsidemargin=1cm
\evensidemargin=1cm 
\usepackage{amsmath} \usepackage{amsxtra}
\usepackage{amscd} \usepackage{amsthm}  
\usepackage{amsfonts}\usepackage{amssymb} 
\usepackage{eucal}\usepackage{bm}
\usepackage{graphicx} 
\setcounter{tocdepth}{1} 

\newtheorem{Lm}[subsubsection]{Lemma}
\newtheorem{Pp}[subsubsection]{Proposition}

\newtheorem{Thm}[subsubsection]{Theorem}
\newtheorem{Def}[subsubsection]{Definition}
\newtheorem{Rem}[subsubsection]{Remark}


\theoremstyle{definition}

\theoremstyle{remark}



\newcommand{\nc}{\newcommand}
\nc{\renc}{\renewcommand}
\nc{\ssec}{\subsection}
\nc{\sssec}{\subsubsection}
\nc{\on}{\operatorname}

\newcommand{\cA}{{\mathcal A}}

\newcommand{\cH}{{\mathcal H}}

\newcommand{\cG}{{\mathcal G}}

\newcommand{\cJ}{{\mathcal J}}

\newcommand{\cL}{{\mathcal L}}

\newcommand{\cF}{{\mathcal F}}

\newcommand{\NN}{{\mathbb N}}
\newcommand{\ZZ}{{\mathbb Z}}
\newcommand{\QQ}{{\mathbb Q}}
\newcommand{\CC}{{\mathbb C}}

\nc{\gJ}{\mathfrak{J}}
\nc{\gL}{\mathfrak{L}}

\newcommand{\Rep}{{\on{Rep}}}

\newcommand{\Qlb}{\mathbb{\bar Q}_\ell}

\newcommand{\Ql}{\mathbb{Q}_\ell}
\newcommand{\toup}[1]{\stackrel{#1}{\to}}
\newcommand{\hook}[1]{\stackrel{#1}{\hookrightarrow}}

\newcommand{\Sp}{\on{\mathbb{S}p}}

\newcommand{\Hom}{\on{Hom}}

\newcommand{\Aut}{\on{Aut}}

\newcommand{\Spec}{\on{Spec}}

\newcommand{\Fr}{{\on{Fr}}}

\newcommand{\id}{\on{id}}
\newcommand{\tr}{\on{tr}}
\newcommand{\QED}{$\square$} 
  
\newcommand{\Fp}{\mathbb{F}_p}  
\newcommand{\bFp}{\ov{\mathbb{F}}_p}

\newcommand{\iso}{{\widetilde\to}}

\newcommand{\comp}{\circ}


\newcommand{\wt}{\widetilde}
\newcommand{\ov}[1]{\overline{#1}}
\newcommand{\select}[1]{{\it{#1}}}

\newcommand{\Hecke}{\on{Hecke}}


\newcommand{\Vect}{\on{Vect}}


\nc{\Hi}{\on{Hi}}
\nc{\Lo}{\on{Lo}}
\nc{\Inv}{\on{Inv}}
\nc{\Fix}{\on{Fix}}
\nc{\oV}{\overset{\scriptscriptstyle\circ}{V}}
\nc{\ssharp}{{\scriptstyle\sharp}}
\nc{\oHecke}{\overset{\scriptstyle\bullet}{\Hecke}}
\nc{\Ran}{\on{Ran}}
\nc{\Whit}{\on{Whit}}
\nc{\Maps}{\on{{\mathcal M}aps}}
\nc{\DGCat}{\on{DGCat}}
\nc{\colim}{\on{colim}}
\nc{\PreStk}{\on{PreStk}}
\nc{\otimesshriek}{\stackrel{!}{\otimes}}
\nc{\Cat}{\on{Cat}}
\nc{\QCoh}{\on{QCoh}}
\nc{\FactMod}{\on{FactMod}}
\nc{\oo}[1]{\overset{\scriptscriptstyle\circ}{#1}}
\nc{\inv}{\on{inv}}
\nc{\IndCoh}{\on{IndCoh}}
\nc{\Groth}{\on{Groth}}
\nc{\coGroth}{\on{coGroth}}
\nc{\Conf}{\on{Conf}}
\nc{\Map}{\on{Map}}
\nc{\Tr}{\on{Tr}}
\nc{\oblv}{\on{oblv}}
\nc{\Fact}{\on{Fact}}
\nc{\e}{\mathrm{e}}
\nc{\DG}{\on{DG}}
\nc{\Spc}{\on{Spc}}
\nc{\bfitDelta}{\bm{\mathit{\Delta}}}
\nc{\Grp}{\on{{\mathcal G}rp}}
\nc{\Fib}{\on{Fib}}
\nc{\cofib}{\on{cofib}}
\nc{\PsId}{\on{Ps-Id}}
\nc{\Sptr}{\on{Sptr}}
\nc{\Reg}{\on{Reg}}
\nc{\bb}[1]{\overset{\scriptscriptstyle\bullet}{#1}}
\nc{\ind}{\on{ind}}
\nc{\FactGe}{\on{FactGe}}
\nc{\SI}{\on{SI}}


\makeatletter
\newcommand*{\doublerightarrow}[2]{\mathrel{
  \settowidth{\@tempdima}{$\scriptstyle#1$}
  \settowidth{\@tempdimb}{$\scriptstyle#2$}
  \ifdim\@tempdimb>\@tempdima \@tempdima=\@tempdimb\fi
  \mathop{\vcenter{
    \offinterlineskip\ialign{\hbox to\dimexpr\@tempdima+1em{##}\cr
    \rightarrowfill\cr\noalign{\kern.5ex}
    \rightarrowfill\cr}}}\limits^{\!#1}_{\!#2}}}
\newcommand*{\triplerightarrow}[1]{\mathrel{
  \settowidth{\@tempdima}{$\scriptstyle#1$}
  \mathop{\vcenter{
    \offinterlineskip\ialign{\hbox to\dimexpr\@tempdima+1em{##}\cr
    \rightarrowfill\cr\noalign{\kern.5ex}
    \rightarrowfill\cr\noalign{\kern.5ex}
    \rightarrowfill\cr}}}\limits^{\!#1}}}
\makeatother

\begin{document}
\title{Towards canonical representations of finite Heisenberg groups}
\author{S. Lysenko}
\address{Institut \'Elie Cartan Nancy (Math\'ematiques), Universit\'e de Lorraine, B.P. 239, F-54506 Vandoeuvre-l\`es-Nancy Cedex, France}
\email{Sergey.Lysenko@univ-lorraine.fr}
\begin{abstract} 
We consider a finite abelian group $M$ of odd exponent $n$ with a symplectic form $\omega: M\times M\to \mu_n$ and the Heisenberg extension $1\to \mu_n\to H\to M\to 1$ with the commutator $\omega$. 
According to the Stone - von Neumann theorem, $H$ admits an irreducible representation with the tautological central character (defined up to a non-unique isomorphism). We construct such irreducible representation of $H$ defined up to a unique isomorphism, so canonical in this sense.
\end{abstract}
\maketitle 
\section{Introduction} 

\sssec{} 
\label{Sect_1.0.1}
Consider a finite abelian group $M$ of odd exponent $n$ with a symplectic form $\omega: M\times M\to \mu_n$. It admits a unique symmetric Heisenberg extension $1\to \mu_n\to H\to M\to 1$ with the commutator $\omega$. According to the Stone - von Neumann theorem, $H$ admits an irreducible representation with the tautological central character (defined up to a non-unique isomorphism). We construct such irreducible representation of $H$ defined up to a unique isomorphism, so canonical in this sense, over a suitable finite extension of $\QQ$.

\sssec{} We are motivated by the following question of Dennis Gaitsgory about \cite{L}. Let $X$ be a smooth projective connected curve over an algebraically closed field $k$. Let $T$ be a torus over $k$ with a geometric metaplectic data $\cG$ as in \cite{GL}. To fix ideas, consider the sheaf-theoretic context of $\ell$-adic sheaves on finite type schemes over $k$. Let $(H, \cG_{Z_H},\epsilon)$ be the metaplectic Langlands dual datum associated to $(T,\cG)$ in (\cite{GL}, Section~6), so $H$ is a torus over $\Qlb$ isogenous to the Langlands dual of $T$. Let $\sigma$ be a twisted local system on $X$ for $(H, \cG_{Z_H})$ in the sense of (\cite{GL}, Section~8.4). To this data in (\cite{GL}, Section~9.5.3) we attached the $\DG$-category of Hecke eigensheaves. The question is whether this category identifies canonically with the $\DG$-category $\Vect$ of $\Qlb$-vector spaces. 
In \cite{L} we constructed such an irreducible Hecke eigensheaf for $\sigma$ out of a given irreducible representation of certain finite Heisenberg group (denoted by $\Gamma$ given by formula (33) in \cite{L}, Section~5.2.4) with the tautological central character.

 This is why we are interested in constructing a canonical irreducible representation of finite Heisenberg groups as in Section~\ref{Sect_1.0.1}. We do this only assuming the order of $M$  odd, the case of even order remains open. 

\section{Main result}

\sssec{} Let $e$ be  an algebraically closed field of characteristic zero. 
Let $M$ be a finite abelian group, $\omega: M\times M\to e^*$ a bilinear form, which is alternating, that is, $\omega(m,m)=0$ for any $m\in M$. Assume the induced map $M\to \Hom(M, e^*)$ is an isomorphism, that is, the form is nondegenerate. 
 
 If $L\subset M$ is a subgroup, $L^{\perp}=\{m\in M\mid \omega(m, l)=0\; \mbox{for all}\; l\in L\}$ is its orthogonal complement, this is a subgroup. The group $L$ is isotropic if $L\subset L^{\perp}$. The subgroup $L$ is lagrangian if $L^{\perp}=L$. For a lagrangian subgroup we get an exact sequence 
\begin{equation}
\label{seq_1} 
0\to L\to M\to L^*\to 0,
\end{equation}
where $L^*=\Hom(L, e^*)$. Namely, we send $m\in M$ to the character $l\mapsto \omega(m, l)$ of $L$. This exact sequence always admits a splitting $L^*\to M$, which is a homomorphism, see for example  (\cite{P}, 4.1). For such a splitting after the obtained identification $M\,\iso\, L\times L^*$ the form $\omega$ becomes 
\begin{equation}
\label{eq_1}
\omega((l_1, \chi_1), (l_2,\chi_2))=\frac{\chi_1(l_2)}{\chi_2(l_1)}
\end{equation}
for $l_i\in L, \chi_i\in L^*$. 

 
 By (\cite{P}, Theorem~2), up to an isomorphism, there is a unique central extension 
\begin{equation}
\label{ext_H_e^*}
1\to e^*\to H_{e^*}\to M\to 1
\end{equation}
with the commutator $\omega$. We are interested in understanding the category of representations of $H_{e^*}$ with the tautological central character.

\sssec{} 
\label{Sect_2.0.2} For a finite abelian group $L$ its exponent is the least common multiple of the orders of the elements of $L$. Let $n$ be the exponent of $M$, this is a divisor of $\sqrt{\mid M\mid}\in\NN$. 
 
 Let $\mu_n=\mu_n(e)$. Let us be given a central extension 
\begin{equation}
\label{def_H_ext}
1\to \mu_n\to H\to M\to 1
\end{equation}
together with a symmetric structure $\sigma$ in the sense of (\cite{B}, Section~1.1) and commutator $\omega$. That is, $\sigma$ is an automorphism of $H$ such that $\sigma^2=\id$, $\sigma\mid_{\mu_b}=\id$, and $\sigma\mod \mu_b$ is the involution $m\mapsto -m$ of $M$. 

 From now on assume $n$ odd. Then by (\cite{P}, Section~1), there is a unique symmetric central extension (\ref{def_H_ext}) up to a unique isomorphism. Besides, (\ref{ext_H_e^*}) is isomorphic to the push-out of (\ref{def_H_ext}) under the tautological character $\iota: \mu_n\hook{} e^*$.
 
  The extension $H$ is constructed as follows. Let $\beta: M\times M\to \mu_n$ be the unique alternating bilinear form such that $\beta^2=\omega$. We take $H=M\times\mu_n$ with the product 
$$
(m_1, a_1)(m_2, a_2)=(m_1+m_2, a_1a_2\beta(m_1, m_2))
$$
for $m_i\in M, a_i\in \mu_n$. Then $\sigma(m,a)=(-m, a)$ for $m\in M, a\in\mu_n$.   

 Let $G=\Sp(M)$, the group of automorphisms of $M$ preserving $\omega$. Let $g\in G$ act on $H$ sending $(m,a)$ to $(gm, a)$. This gives the semi-direct product $H\rtimes G$. 

\sssec{}  The following version of the Stone - von Neumann theorem holds for $H$, the proof is left to a reader. 
\begin{Pp} Up to an isomorphism, there is a unique irreducible representation of $H$ over $e$ with the tautological central character $\iota: \mu_d\hook{} e^*$.
\end{Pp}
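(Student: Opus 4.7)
The plan is the standard Stone--von Neumann argument by induction from a lagrangian. First choose a lagrangian subgroup $L\subset M$, which exists since $M$ is a finite abelian group of odd order equipped with a nondegenerate alternating form (one may in fact take $L$ to be any maximal isotropic, whose existence is implicit in the paper's use of the splitting of (\ref{seq_1})). Let $\tilde L\subset H$ denote the preimage of $L$, so that $1\to\mu_n\to\tilde L\to L\to 1$. The explicit product formula
\[
(m_1,a_1)(m_2,a_2)=(m_1+m_2,\,a_1a_2\,\beta(m_1,m_2))
\]
shows that the commutator in $H$ of two elements lifting $l_1,l_2\in L$ equals $\omega(l_1,l_2)=1$ (as $L$ is isotropic), hence $\tilde L$ is abelian. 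Because $\tilde L$ is a finite abelian group containing $\mu_n$, the restriction map on character groups is surjective, so we may extend $\iota$ to a character $\chi:\tilde L\to e^*$.

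For existence, set $\pi\df\Ind_{\tilde L}^H\chi$. Nondegeneracy gives $|L|^2=|M|$, so $\dim\pi=[H:\tilde L]=|M|/|L|=|L|$, and the central character of $\pi$ is visibly $\iota$. To prove irreducibility note that $\tilde L$ is normal in $H$ since $M$ is abelian; by Mackey's criterion it suffices to verify $\chi^h\ne\chi$ for every $h\in H\setminus\tilde L$. For $l\in\tilde L$ with image $\bar l\in L$, the formula for $H$ yields $hlh^{-1}=l\cdot\omega(\bar h,\bar l)$ with $\bar h$ the image of $h$ in $M$; hence $\chi^h\chi^{-1}$ descends to the character $\omega(\bar h,-)$ of $L$, which is nontrivial precisely because $L^\perp=L$ and $\bar h\notin L$.

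For uniqueness I would count via Wedderburn. Consider the $\iota$-isotypic component $A=e[H]_\iota\subset e[H]$ for the (left) $\mu_n$-action; since $\mu_n$ is central, $A$ is a two-sided ideal, hence a quotient algebra of $e[H]$, and its $e$-dimension equals $|H|/n=|M|$. Wedderburn then gives
\[
A\;\cong\;\prod_{j}\on{M}_{d_j}(e),
\]
the product ranging over the isomorphism classes of irreducible representations of $H$ with central character $\iota$, of respective dimensions $d_j$. Thus $\sum_j d_j^2=|M|$. The representation $\pi$ constructed above furnishes one summand with $d_\pi=|L|=\sqrt{|M|}$, which already saturates the identity; consequently it is the unique irreducible of $H$ with central character $\iota$, up to isomorphism.

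The only non-routine ingredients are the existence of a lagrangian $L$ and the explicit commutator in $H$; once these are in hand, both irreducibility (via Mackey on the normal abelian subgroup $\tilde L$) and uniqueness (via the Wedderburn count above) are formal. There is no essential obstacle beyond the standard care needed to identify $\chi^h\chi^{-1}$ with the character $\omega(\bar h,-)$ of $L$.
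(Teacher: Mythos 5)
The paper leaves the proof of this Proposition to the reader, so there is no in-text argument to compare against; but your proof is correct and is the standard Stone--von Neumann argument in the finite setting. All the steps check out: the preimage $\tilde L$ of a lagrangian $L$ is abelian because the commutator in $H$ is $\omega$ and $L$ is isotropic; $\iota$ extends to a character $\chi$ of $\tilde L$ by divisibility of $e^*$; $\tilde L$ is normal since it contains the center $\mu_n$ and $H/\mu_n=M$ is abelian; the conjugation computation $h l h^{-1}=l\cdot\omega(\bar h,\bar l)$ is correct, so Mackey's criterion (for a normal abelian subgroup) gives irreducibility of $\Ind_{\tilde L}^H\chi$ exactly because $L^\perp=L$; and the Wedderburn count $\sum_j d_j^2=\dim e[H]_\iota=|H|/n=|M|=|L|^2$ shows that your $|L|$-dimensional $\pi$ is the unique summand. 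The only items you use without proof are the existence of a lagrangian and the Mackey criterion for normal subgroups, both of which are standard and consistent with what the paper itself takes for granted (it uses lagrangians and the splitting of the sequence $0\to L\to M\to L^*\to 0$ freely). As a cosmetic note, the Proposition's statement has a typo $\mu_d$ for $\mu_n$; your argument correctly treats it as $\mu_n$.
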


\sssec{} 
\label{Sect_2.0.4} Write $\cL(M)$ for the set of lagragian subgroups in $M$. For $L\in \cL(M)$ let $\bar L$ be the preimage of $L$ in $H$, this is a subgroup. If $\chi_L: \bar L\to e^*$ is a character extending the  
tautological character $\iota: \mu_b\hook{} e^*$, set
$$
\cH_L=\{f: H\to e\mid f(\bar l h)=\chi_L(\bar l)f(h),\; \mbox{for}\; \bar l\in\bar L, h\in H\}
$$
This is a representation of $H$ by right translations. It is irreducible with  central character $\iota$. 

\sssec{} 
\label{Sect_2.0.5}
We study the following.

\medskip\noindent
{\bf Problem}: Describe the category $\Rep_{\iota}(H)$ of representations of $H$ over $e$ with central character $\iota: \mu_b\hook{} e^*$. Is there an object of $\Rep_{\iota}(H)$, which is irreducible and defined up to a unique isomorphism? (If yes, it would provide an equivalence between $\Rep_{\iota}(H)$ and the category of $e$-vector spaces).  
 
\sssec{} Let $I$ be the set of primes appearing in the decomposition of $n$, write $n=\prod_{p\in I} p^{r(p)}$ with $r(p)>0$. Let $K\subset e$ be the subfield generated over $\QQ$ by $\{\sqrt{p}\mid p\in I\}$ and $\mu_n$. 
\begin{Thm} 
\label{Th_1} There is an irreducible representation $\pi$ of $H$ over $K$ with central character $\iota: \mu_n\hook{} K^*$ defined up to a unique isomorphism. The $H$-action on $\pi$ extends naturally to an action of $H\rtimes G$. 
\end{Thm}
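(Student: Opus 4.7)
The plan is to realize the canonical representation as a limit
\[
\pi=\lim_{L\in \cL(M)}\cH_L
\]
of Schr\"odinger models $\cH_L=\Ind_{\bar L}^H\chi_L$ attached to each lagrangian subgroup, with transition maps given by canonically normalized intertwiners, and then to check that everything is $G$-equivariant.

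\emph{Canonical character.} For $L\in\cL(M)$, the preimage $\bar L\subset H$ is abelian (since $L$ is isotropic) and stable under the symmetric involution $\sigma$ (since $L=-L$). The map $q_L:\bar L\to\mu_n$, $x\mapsto x\sigma(x)$, is well-defined (it lands in $\mu_n=\ker(H\to M)$ because $L=-L$) and a character (because $\bar L$ is abelian). Since $n$ is odd I set $\chi_L:=q_L^{(n+1)/2}$, the unique character with $\chi_L^2=q_L$. A direct check gives $\chi_L|_{\mu_n}=\iota$ (because $q_L(a)=a^2$ for $a\in\mu_n$) and $\chi_L\circ\sigma=\chi_L$; conversely these properties characterize $\chi_L$, for any other $\sigma$-invariant extension of $\iota$ has square $q_L$ and so differs from $\chi_L$ by a $2$-torsion character of $L$, which is trivial because $|L|$ is odd. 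Then $\cH_L=\Ind_{\bar L}^H\chi_L$ is the irreducible $K$-linear $H$-module of Section~\ref{Sect_2.0.4}, now attached to a canonical choice.

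\emph{Intertwiners and the cocycle.} For each ordered pair $(L_1, L_2)$ I construct a canonical $\phi_{L_1,L_2}:\cH_{L_1}\iso\cH_{L_2}$. In the transverse case $L_1\cap L_2=0$ the form $\omega$ identifies $L_2$ with the Pontryagin dual of $L_1$, and $\phi_{L_1,L_2}$ is the Fourier-type transform weighted by $\omega$, normalized by $|L_1|^{-1/2}$; since $|L_1|=\prod_{p\in I}p^{a(p)}$ and $\sqrt{p}\in K$ for each $p\in I$, this normalization lies in $K$. For general $L_1, L_2$ put $L_0=L_1\cap L_2$: this is isotropic, $L_0^{\perp}/L_0$ inherits a nondegenerate symplectic form, and the images of $L_1, L_2$ are transverse lagrangians there; $\phi_{L_1,L_2}$ is defined by the transverse recipe on the quotient combined with induction along $L_0$. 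The main step is to verify the cocycle
\[
\phi_{L_2,L_3}\circ\phi_{L_1,L_2}=\phi_{L_1,L_3}
\]
for all triples. By Schur's lemma the two sides agree up to a scalar $c(L_1,L_2,L_3)\in K^*$, a Gauss-sum quantity depending on the symplectic configuration. Its absolute-value component is cancelled by the chosen $|L|^{-1/2}$ normalization, while its root-of-unity phase -- the Maslov/metaplectic obstruction -- is trivialized by the canonical $\sigma$-invariant choice of $\chi_L$ together with $n$ odd. This is the heart of the argument and the main obstacle: it is precisely here that $K$ must contain $\sqrt{p}$ for every $p\in I$, and that the sign responsible for the metaplectic double cover in the even-order case disappears.

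\emph{Limit and $G$-action.} Given the cocycle, $(\cH_L,\phi_{L_1,L_2})_{L\in\cL(M)}$ is a coherent diagram of $H$-modules with isomorphism transition maps, so $\pi=\lim\cH_L$ is a specific object -- not only an isomorphism class -- irreducible with central character $\iota$ and defined up to unique isomorphism in the desired sense. For $g\in G=\Sp(M)$, $g$ preserves $\omega$ and the symmetric structure, hence sends $\chi_L$ to $\chi_{gL}$ and yields canonical isomorphisms $g_{\ast}:\cH_L\iso\cH_{gL}$; since $\phi_{L_1,L_2}$ is constructed from symplectic data alone it commutes with the $g_{\ast}$'s, and these descend to an action on $\pi$ that combines with $H$ into the desired $H\rtimes G$-action.
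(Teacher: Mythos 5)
Your approach is genuinely different from the paper's, and it has a real gap at precisely the step you flag as ``the heart of the argument'': the cocycle relation $\phi_{L_2,L_3}\circ\phi_{L_1,L_2}=\phi_{L_1,L_3}$ is asserted, not proved, and with your normalization it is actually false. With $\chi_L$ the canonical $\sigma$-invariant character (which you characterize correctly --- it is the projection $\bar L=L\times\mu_n\to\mu_n$) and the Fourier transform normalized by $|L_1|^{-1/2}$, the scalar $c(L_1,L_2,L_3)$ for three pairwise transverse lagrangians is a normalized Gauss sum attached to the Maslov quadratic form of the triple; over $\Fp$ this is a nontrivial fourth root of unity in general (e.g.\ $\pm i$ for $p\equiv 3\pmod 4$). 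Oddness of $n$ does not make it vanish, and the choice of $\chi_L$ was already spent fixing the model $\cH_L$, so it cannot be used a second time to kill the cocycle. What oddness gives you is that this Maslov $2$-cocycle is a \emph{coboundary}, i.e.\ there exists a further normalization factor $\lambda(L_1,L_2)$ with $c=\delta\lambda$ --- but constructing a canonical such $\lambda$ is exactly the nontrivial content of the theorem, and your proposal does not supply it.

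The paper resolves this by \emph{not} indexing the intertwiners by pairs of lagrangians at all: the operators $F_{N^0,L^0}$ of Section~\ref{Sect_3.1.4} are indexed by pairs of points of the $\mu_2$-gerbe $\wt\cL(M)$ of square roots of the line $\cJ\otimes\det L$, the trace-of-Frobenius function $F^{cl}$ is genuine in both variables, and the exact cocycle identity $F_{R^0,N^0}\circ F_{N^0,L^0}=F_{R^0,L^0}$ is inherited from the geometric construction of \cite{LL} (where it is proved at the level of perverse sheaves). The choice of square root of $\det L$ is precisely the extra datum that replaces the missing $\lambda$; the sign ambiguity is then absorbed into the definition of $\pi$ as compatible collections $\{f_{L^0}\}$ over the gerbe (one checks $f_{L^{0'}}=-f_{L^0}$ for the two lifts of $L$, so the limit is one-dimensional over any single $\cH_L$). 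A secondary difference: the paper first reduces to $n=p$ by decomposing into $p$-primary blocks and constructing a canonical isotropic $S$ with $S^{\perp}/S$ an $\Fp$-vector space, which is what allows it to invoke \cite{LL} directly; working with general $M$ and $L_0=L_1\cap L_2$ as you do makes the cocycle analysis strictly harder without removing the obstruction. To repair your argument you would need to either import the gerbe (or equivalently a canonical trivialization of the Maslov cocycle) or prove the cocycle identity by an independent Gauss-sum computation --- neither of which is present.
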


\begin{Rem} Let $K'\subset e$ be the subfield generated over $\QQ$ by $\mu_n$. The field of definition of the character of $\pi$ is $K'$.
However, we do not expect that for any $H$ with $n$ odd Theorem~\ref{Th_1} holds already with $K$ replaced by $K'$, but we have not checked that. Our choices of $\sqrt{p}$ for $p\in I$ are made to use the results of \cite{LL}, and the formulas from \cite{LL} do not work without these choices. Note also the following. If $L$ is an odd abelian group, and $b: L\times L\to e^*$ is a nondegenerate symmetric bilinear form then the Gauss sum of $b$ is defined as 
$$
G(L,b)=\sum_{l\in L} b(l,l)
$$
Using the classification of such symmetric bilinear forms given in \cite{Ko}, one checks that  $G(L,b)^4=\mid L\mid^2$. Since the construction of $\pi$ in Theorem~\ref{Th_1} is related to representing the corresponding 2-cocyle (given essentially by certain Gauss sums) as a coboundary (after some minimal additional choices), we expect that our choices of $\sqrt{p}$ for $p\in I$ are necessary.
\end{Rem}

\begin{Rem} For $L\in\cL(M)$, the $H$-representation $\cH_L$ from Section~\ref{Sect_2.0.4} is defined over $K$. We sometimes view it as a representation over $K$, the precise meaning is hopefully clear from the context.
\end{Rem}

\section{Proof of Theorem~\ref{Th_1}}
\label{Sect_3}

\sssec{Reduction} For $p\in I$ let 
$$
H_p=\{h\in H\mid h^{(p^s)}=1\;\mbox{for}\; s\; \mbox{large enough}\}
$$
and similarly for $M_p$. Then $H_p\subset H$ is a subgroup that fits into an exact sequence $1\to \mu_{p^{r(p)}}\to H_p\to M_p\to 1$, and $H=\prod_{p\in I} H_p$, product of groups. Indeed, $\omega(H_p, H_q)=1$ for $p, q\in I$, $p\ne q$. Besides, $\sigma$ preserves $H_p$ for each $p\in I$, so $(H_p, \sigma\!\mid_{H_p})$ is a symmetric Heisenberg extension of $(M_p,\omega_p)$ by $\mu_{p^{r(p)}}$. Here $\omega_p: M_p\times M_p\to \mu_{p^{r(p)}}$ is the restriction of $\omega$. So, Problem~\ref{Sect_2.0.5} reduces to the case of a prime $n$. If $M_p=0$ then take $\pi_p$ be the 1-dimensional representation given by the tautological character $\mu_{p^{r(p)}}\hook{} e^*$. 

 For $p\in I$ odd let $K_p\subset e$ be the subfield generated over $\QQ$ by $\mu_{p^{r(p)}}$ and $\sqrt{p}$. We prove Theorem~\ref{Th_1} in the case of an odd prime $n$ getting for $p\in I$ a representation $\pi_p$ of $H_p$ over $K_p$, hence over $K$ also. Then for any odd $n$, $\pi=\otimes_{p\in I} \pi_p$ is the desired representation. 

\sssec{} From now on we assume $n=p^r$ for an odd prime $p$. 

\ssec{Case $r=1$} 
\label{Sect_3.1}

\sssec{} In this section we assume $M$ is a $\Fp$-vector space of dimension $2d$. To apply the results of \cite{LL}, pick an isomorphism $\psi: \Fp\to\mu_p$. It allows to identify $H$ with $M\times\Fp$. We then   
view $\cL(M), H$ as algebraic varieties over $\Fp$. We allow the case $d=0$ also.

\sssec{} Recall the following construction from (\cite{LL}, Theorem~1).\footnote{For this construction we adopt the conventions of \select{loc.cit} about $\ZZ/2\ZZ$-gradings and \'etale $\Qlb$-sheaves on schemes over $\Fp$.} 

 Pick a prime $\ell\ne p$, and an algebraic closure $\Qlb$ of $\Ql$. We assume $\Qlb$ is chosen in such a way that $K\subset \Qlb$ is a subfield. In particular, we get $\sqrt{p}\in K\subset\Qlb$. It gives rise to the $\Qlb$-sheaf $\Qlb(\frac{1}{2})$ over $\Spec\Fp$.
  
 Pick a 1-dimensional $\Fp$-vector space $\cJ$ of parity $d\mod 2$ as $\ZZ/2\ZZ$-graded. Let $\cA$ be the line bundle (of parity zero as $\ZZ/2\ZZ$-graded) on $\cL(M)$ with fibre $\cJ\otimes \det L$ at $L\in\cL(M)$. Write $\wt\cL(M)$ for the gerbe of square roots of $\cA$.   
 
 In \select{loc.cit} we have constructed an irreducible perverse sheaf $F$ on $\wt\cL(M)\times\wt\cL(M)\times H$. Though in  \select{loc.cit.} we mostly worked over an algebraic closure $\bFp$, $F$ is defined over $\Fp$. 
 
\begin{Lm} For any $i: \Spec\Fp\to \wt\cL(M)\times\wt\cL(M)\times H$, $\tr(\Fr, i^*F)\in K$. Here $\Fr$ is the geometric Frobenius endomorphism.
\end{Lm}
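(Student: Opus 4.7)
The plan is to reduce the lemma to a statement about the elementary building blocks used in \cite{LL} to construct $F$, and to verify that each such block has Frobenius traces on $\Fp$-points lying in $K=\QQ(\mu_p,\sqrt{p})$. The subring $K\subset\Qlb$ is stable under the basic trace formalism: if the Frobenius trace functions of the input sheaves take values in $K$, then the Grothendieck--Lefschetz trace formula, together with perverse truncation and intermediate extension, produces only finite $\ZZ$-linear combinations of such values, hence outputs sheaves whose trace functions are again valued in $K$.

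First, I would recall the step-by-step construction of $F$ in \cite{LL}, identifying each application of a six-functor operation. The only non-constant building blocks entering the construction are: (i) the Artin--Schreier sheaf $\cL_\psi$ attached to the fixed isomorphism $\psi\colon\Fp\iso\mu_p$, whose Frobenius traces on $\Fp$-points are values of $\psi$ and hence lie in $\mu_p\subset K$; (ii) the half-Tate twist $\Qlb(\tfrac12)$, which, with our fixed choice of $\sqrt{p}\in K\subset\Qlb$, has geometric Frobenius trace $(\sqrt{p})^{-1}\in K$; (iii) constant sheaves on finite-type $\Fp$-schemes parametrizing pairs of Lagrangians, their intersection strata, and trivializations of the square-root gerbe $\wt\cL(M)\to\cL(M)$ of $\cA$, whose trace contributions are rational integers (cardinalities, signs from parity).

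Second, I would combine these observations. At a geometric point $i$, applying the trace formula to the chain of operations defining $F$ expresses $\tr(\Fr,i^*F)$ as a finite $\QQ$-linear combination of products of values of $\psi$ and integer powers of $\sqrt{p}$; all such expressions lie in $K$.

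The main obstacle will be the bookkeeping around the half-Tate twist and the $\ZZ/2\ZZ$-parity shift by the line $\cJ$ of parity $d\bmod 2$. One must confirm that no unnormalized quadratic Gauss sum sneaks in through the construction, since such a sum generally lives in $\QQ(\sqrt{\pm p})$ rather than in $\QQ(\sqrt{p})$, and that the signs produced by the parity grading do not introduce hidden factors of $\sqrt{-1}$. The conventions of \cite{LL} are designed precisely so that each Gauss sum appearing is paired with the appropriate power of $\Qlb(\tfrac12)$, collapsing to a sign times an integer power of $\sqrt{p}$; once this matching is verified step-by-step against \cite{LL}, the lemma follows.
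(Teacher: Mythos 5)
Your approach is essentially the same as the paper's: both reduce the claim to the fact that the only non-constant inputs to the construction of $F$ in \cite{LL} are the Artin--Schreier sheaf (traces in $\mu_p$) and Tate twists (traces are powers of $\sqrt{p}$), so the resulting trace function is valued in $K$. The paper's proof is more direct, however, and neatly sidesteps the concern you flag about perverse truncation and intermediate extension interacting with the trace formalism: it cites formula (10) of (\cite{LL}, Section~3.3), which, after a surjective smooth localization given by the choice of an auxiliary Lagrangian, exhibits $F$ explicitly as the convolution along $H$ of two rank-one local systems. At the level of trace functions, this is just the convolution over the finite group $H(\Fp)$ of two $K$-valued functions, which visibly stays in $K$; there is no need to track truncations or worry about stray Gauss sums, since the normalization is already baked into that formula. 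Your Step (iii) --- the trivializations of the gerbe contributing only signs --- is correct and matches what happens in the cited formula, but if you carry out the ``step-by-step verification against \cite{LL}'' you promise in the last paragraph, you will find that appealing to formula (10) collapses the whole argument to the paper's two-sentence proof.
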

\begin{proof}
This follows from formula (10) in (\cite{LL}, Section~3.3). Namely, after a surjective smooth localization (a choice of an additional lagrangian in $M$),  there is an explicit formula for $F$ as the convolution along $H$ of two explicit rank one local systems. Their traces of Frobenius lie in $K$, as their definition involves only the Artin-Schreier sheaf and Tate twists. So, the same holds after the convolution along the finite group $H(\Fp)$. 
\end{proof}

\sssec{} 
\label{Sect_3.1.4}
For an algebraic stack $S\to\Spec \Fp$ we write $S(\Fp)$ for the set of isomorphism classes of its $\Fp$-points. In view of the isomorphism $\psi:\Fp\,\iso\,\mu_p$ fixed above,  for $L\in\cL(M)(\Fp)$ we identify $\bar L=L\times \mu_p$ with $L\times \Fp$. Let 
$$
F^{cl}: \wt\cL(M)(\Fp)\times\wt\cL(M)(\Fp)\times H(\Fp)\to K
$$ 
be the function trace of Frobenius of $F$. 

 For $L\in\cL(M)(\Fp)$ its preimage in $\wt\cL(M)(\Fp)$ consists of two elements. We let $\mu_2$ act on $\wt\cL(M)(\Fp)$ over $\cL(M)(\Fp)$ permuting the elements in the preimage of each $L\in\cL(M)(\Fp)$. We call a function $h: \wt\cL(M)(\Fp)\to K$ genuine if it changes by the nontrivial character of $\mu_2$ under this $\mu_2$-action. Recall that $F^{cl}$ is genuine with respect the first and the second variable.
 
 Let us write $L^0$ for a point of $\wt\cL(M)(\Fp)$ over $L\in \cL(M)(\Fp)$. As in (\cite{LL}, Section~2), for $L^0, N^0\in \wt\cL(M)(\Fp)$ viewing 
$\cH_L, \cH_N$ as $H$-representations over $K$, we define the canonical intertwining operator 
$$
F_{N^0, L^0}: \cH_L\to \cH_N
$$
by
$$
(F_{N^0, L^0}f)(h_1)=\int_{h_2\in H} F_{N^0, L^0}(h_1h_2^{-1})f(h_2)dh_2,
$$
where our measure $dh_2$ is normalized by requiring that the volume of a point is one. 

 Let $G=\Sp(M)$ viewed as an algebraic group over $\Fp$. It acts naturally on $\cL(M), H$, and $\wt\cL(M)$. 
By definition, for $g\in G, (m,a)\in H$, $g(m,a)=(gm, a)$ for $m\in M, a\in\Fp$, and this action preserves the symmetric structure $\sigma$ on $H$. If $g\in G$, $f: H\to K$ then $gf: H\to K$ is given by $(gf)(h)=f(g^{-1}h)$. Then $g\in G(\Fp)$ yields an isomorphism $\cH_L\,\iso\, \cH_{gL}$. We let $G$ act diagonally on $\wt\cL(M)\times\wt\cL(M)\times H$. 
 
 The above intertwining operators satisfy the following properties

\begin{itemize}
\item $F_{L^0, L^0}=\id$;
\item $F_{R^0, N^0}\comp F_{N^0, L^0}=F_{R^0, L^0}$ for any $R^0, N^0, L^0\in \wt\cL(M)(\Fp)$;
\item for any $g\in G(\Fp)$ we have $g\comp F_{N^0, L^0}\comp g^{-1}=F_{gN^0, gL^0}$. 
\end{itemize}
 
\begin{Def} Let $\pi$ be the $K$-vector space of collections $f_{L^0}\in \cH_L$ for $L^0\in \wt\cL(M)(\Fp)$ satisfying the property: for $N^0, L^0\in \wt\cL(M)(\Fp)$ one has 
$$
F_{N^0, L^0}(f_{L^0})=f_{N^0}
$$
This is our canonical $H$-representation over $K$. 
\end{Def} 

 We let $G(\Fp)$ act on $\wt\cL(M)(\Fp)\times H(\Fp)$ diagonally. This yields a $G(\Fp)$-action on $\pi$ sending $\{f_{L^0}\}\in \pi$ to the collection $L^0\mapsto g(f_{g^{-1}L^0})$. 
 
\ssec{Case $r\ge 1$}

\sssec{} Let $L$ be a finite abelian group, $p$ be any prime number. For $k\ge 0$ let $L[p^k]=\{l\in L\mid p^kl=0\}$ and 
$$
\rho_k(L)=L[p^k]/(L[p^{k-1}]+pL[p^{k+1}])
$$
Each $\rho_k(L)$ is a vector space over $\Fp$. Note that 
$$
\rho_k(\ZZ/p^m\ZZ)\,\iso\, \left\{
\begin{array}{ll}
\ZZ/p\ZZ, & m=k\\
0, & \mbox{otherwise}
\end{array}
\right.
$$
For finite abelian groups $L, L'$ one has canonically $
\rho_k(L\times L')\,\iso\, \rho_k(L)\times \rho_k(L')$.

\sssec{Canonical isotropic subgroup} Let $p$ be any prime, $M$ is a finite abelian $p$-group of exponent $n=p^r$ with an alternating nondegenerate bilinear form $\omega: M\times M\to \mu_n$. We first construct by induction a canonical isotropic subgroup $S\subset M$ such that $\Aut(M)$ fixes $S$ and $S^{\perp}/S$ is a $\Fp$-vector space.

 Write the set $\{r>0\mid \rho_r(M)\ne 0\}$ as $\{r_1,\ldots, r_s\}$ with $0<r_1<r_2<\ldots <r_s$. There is an orthogonal direct sum $(M,\omega)\,\iso\, \oplus_{i=1}^s (M_i,\omega_i)$, where $\omega_i: M_i\times M_i\to \mu_n$ is an alternating nondegenerate bilinear form, and $M_i$ is a free $\ZZ/p^{r_i}$-module of finite rank. 
 
 Let 
$$
r'=\left\{
\begin{array}{ll}
\frac{r_s}{2}, & r_s\; \mbox{is even}\\
\frac{r_s+1}{2}, & r_s\; \mbox{is odd}
\end{array}
\right.
$$
Set $S_1=p^{r'}M$. Since $\omega$ takes values in $\mu_{p^{r_s}}$, $S_1$ is isotropic and fixed by $\Aut(M)$. By induction hypothesis, we have a canonical isotropic subgroup $S'\subset M_1:=S_1^{\perp}/S_1$ such that $S'^{\perp}/S'$ is a $\Fp$-vector space, where $S'^{\perp}$ denotes the orthogonal complement of $S'$ in $M_1$. Let $S$ be the preimage of $S'$ under $S_1^{\perp}\to M_1$. This is our canonical isotropic subgroup in $M$. 

 Set $M_c=S^{\perp}/S$, it is equipped with the induced alternating nondegenerate bilinear form $\omega_c: M_c\times M_c\to \mu_p$, the subscript $c$ stands for `canonical'.  
 
\sssec{} We keep the assumptions of Theorem~\ref{Th_1}, so $p$ is odd. View $S$ as a subgroup of $H$ via $s\mapsto (s,0)\in H$ for $s\in S$. Let $H^S=S^{\perp}\times \mu_n$, this is a subgroup of $H$. Since $S$ lies in the kernel of $\beta:  S^{\perp}\times S^{\perp}\to\mu_n$, we get the alternating nondegenerate bilinear form $\beta_c: M_c\times M_c\to\mu_p$ given by $\beta_c(m_1, m_2)=\beta(\tilde m_1,\tilde m_2)$ for $\tilde m_i\in S^{\perp}$ over $m_i$. 

 Set $H_c=M_c\times \mu_p$ with the product
$$
(m_1, a_1)(m_2, a_2)=(m_1+m_2, a_1a_2\beta_c(m_1 m_2))
$$
This is a central extension $1\to \mu_p\to H_c\to M_c\to 1$ with the commutator $\omega_c$ and the symmetric structure $\sigma_c(m,a)=(-m,a)$ for $(m,a)\in H_c$. 

 Let $\alpha_S: H^S\to H_c$ be the homomorphism sending $(m,a)$ to $(m\!\mod S, a)$ for $m\in S^{\perp}$, its kernel is $S$.  

 As in Section~\ref{Sect_3.1}, we get the algebraic stack $\wt\cL(M_c), \cL(M_c), H_c$ over $\Fp$. Let $G=\Sp(M,\omega)$ be the group of automorphisms of $M$ preserving $\omega$, this is a finite group. We let $g\in G$ act on $H$ sending $(m,a)$ to $(gm, a)$. Let $g\in G$ act on functions $f: H\to K$ by $(gf)(h)=f(g^{-1}h)$ for $h\in H$. For $L\in\cL(M)$ this yields an isomorphism $g: \cH_L\,\iso\, \cH_{gL}$ of $K$-vector spaces. 
 
 Since $G$ preserves $S^{\perp}$, we have the homomorphism $G\to G_c:=\Sp(M_c)(\Fp)$. Via this map, $G$ acts on $\cL(M_c)(\Fp)$, $\wt\cL(M_c)(\Fp)$, $H_c$. 
 
\sssec{} We denote elements of $\cL(M_c)$ by a capital letter with a subscript $c$. For $L_c\in \cL(M_c)$ let $L\in \cL(M)$ denote the preimage of $L_c$ under $S^{\perp}\to M_c$. 

 For $L_c\in \cL(M_c)$ we have the representation $\cH_{L_c}$ of $H_c$ over $K$ defined in Section~\ref{Sect_3.1.4}, and the $H$-representation $\cH_L$ over $K$ defined in Section~\ref{Sect_2.0.4}.    

 For $L_c\in \cL(M_c)$ any $f$ in the space of invariants $\cH_L^S$ is the extension by zero under $H^S\hook{} H$. The space $\cH_L^S$ is naturally a $H_c$-module. We get an isomorphism of $H_c$-modules $\tau_{L_c}: \cH_{L_c}\,\iso\, \cH_L^S$ sending $f$ to the composition $H^S\toup{\alpha_S} H_c\toup{f} K$ extended by zero to $H$. 
 
  For $g\in G$, $L_c\in\cL(M_c)$ the isomorphism $g: \cH_L\,\iso\,\cH_{gL}$ yields an isomorphism $g: \cH_L^S\,\iso\,\cH_{gL}^S$ of $S$-invariants. 
 
\sssec{} Given $L_c^0, N_c^0\in \wt\cL(M_c)(\Fp)$, we define a canonical intertwining operator 
\begin{equation}
\label{operator_cF_can}
\cF_{N_c^0, L_c^0}: \cH_L\,\iso\,\cH_N
\end{equation}
as the unique isomorphism of $H$-modules such that the diagram commutes
$$
\begin{array}{ccc}
\cH_L^S & \;\toup{\cF_{N_c^0, L_c^0}}\; &\cH_N^S\\
\uparrow\lefteqn{\scriptstyle \tau_{L_c}} && \uparrow\lefteqn{\scriptstyle \tau_{N_c}} \\
\cH_{L_c} & \;\toup{F_{N_c^0, L_c^0}}\; & \cH_{N_c}
\end{array}
$$
Here $F_{N_c^0, L_c^0}$ are the canonical intertwining operators from Section~\ref{Sect_3.1.4}. The properties of the canonical intertwining operators of Section~\ref{Sect_3.1.4} imply the following propeties of (\ref{operator_cF_can}): 

\begin{itemize}
\item $\cF_{L_c^0, L_c^0}=\id$ for $L_c^0\in \wt\cL(M_c)(\Fp)$;
\item for $R_c^0, N_c^0, L_c^0\in \wt\cL(M_c)(\Fp)$ one has
$$
\cF_{R_c^0, N_c^0}\comp F_{N_c^0, L_c^0}=F_{R_c^0, L_c^0}
$$
\item for $g\in G, N_c^0, L_c^0\in \wt\cL(M_c)(\Fp)$ we have $g\comp F_{N_c^0, L_c^0}\comp g^{-1}=F_{gN^0, gL^0}$.
\end{itemize}

\begin{Def} Let $\pi$ be the $K$-vector space of collections $f_{L_c^0}\in \cH_L$ for $L_c^0\in \wt\cL(M_c)(\Fp)$ satisfying the property: for $N_c^0, L_c^0\in \wt\cL(M)(\Fp)$ one has
$$
\cF_{N_c^0, L_c^0}(f_{L_c^0})=f_{N_c^0}
$$
The element $h\in H$ sends $\{f_{L_c^0}\}\in\pi$ to the collection $\{h(f_{L_c^0})\}\in\pi$. This is our canonical $H$-representation over $K$.
\end{Def}

 The group $G$ acts on $\pi$ sending $\{f_{L_c^0}\}\in\pi$ to the collection $L_c^0\mapsto  g(f_{g^{-1}L_c^0})$. This is a version of the Weil representation of $G$. (In the case when the field of coefficients is $\CC$, this $G$-representation was also obtained in \cite{CMS}, however a canonical representation of $H$ was not constructed in \cite{CMS}). 
 
 The above actions of $H$ and $G$ on $\pi$ combine to an action of the semi-direct product $H\rtimes G$ on $\pi$. Theorem~\ref{Th_1} is proved.
 
\medskip\noindent
{\bf Acknowledgements.} I am grateful to Sam Raskin for an email discussion of the subject.

\end{document}